\newtheorem{theorem}{Theorem}%[section]
\newtheorem{corollary}[theorem]{Corollary}
\newtheorem{proposition}[theorem]{Proposition}
\newtheorem{remark}[theorem]{Remark}
\newenvironment{proof}{\begin{trivlist}\item[]{\it Proof.}}
{\hfill$\square$\end{trivlist}}
\def\cocoa{{\hbox{\rm C\kern-.13em o\kern-.07em C\kern-.13em o\kern-.15em A}}}
\def\mc{{\mathbb{C}}}
\def\mz{{\mathbb{Z}}}
\def\mn{{\mathbb{N}}}
\def\mq{{\mathbb{Q}}}
\begin{document}

\title{Defining relation for semi-invariants\\
of three by three matrix triples}
\author{M\'aty\'as Domokos ${}^a$
\thanks{Partially supported by OTKA NK72523, NK81203.}
\quad and \quad Vesselin Drensky ${}^b$
\\
\\
{\small ${}^a$ R\'enyi Institute of Mathematics,
Hungarian Academy of Sciences,}\\
{\small Re\'altanoda utca 13-15, 1053 Budapest, Hungary}\\
{\small E-mail: domokos@renyi.hu }
\\
\\
{\small ${}^b$ Institute of Mathematics and Informatics, Bulgarian Academy of Sciences,}\\
{\small  Acad. G. Bonchev Str., Block 8, 1113 Sofia, Bulgaria}\\
{\small E-mail: drensky@math.bas.bg}
}

\date{}                                           % Activate to display a given date or no date

\maketitle

\begin{abstract}
The single defining relation of the algebra of $SL_3\times SL_3$-invariants of triples of $3\times 3$
matrices is explicitly computed. Connections to some other prominent algebras of invariants are pointed out.
\end{abstract}

\noindent 2010 MSC: 13A50, 14L30, 15A72, 16R30.

\section{Introduction}\label{sec:intro}

Denote by $M_n(K)$ the space of $n\times n$ matrices over an infinite field $K$.
The direct product of two copies of the general linear group $G_n:=GL_n(K)\times GL_n(K)$ acts linearly on $M_n(K)$:
The group element $(g,h)$  maps $A\in M_n(K)$ to $gAh^{-1}$. Take the direct sum
$M_n(K)^m:=\underbrace{M_n(K)\oplus\cdots\oplus M_n(K)}_m$ of $m$ copies of this representation of
$G_n$.
The action of $G_n$ induces an action on the algebra of polynomial functions $K[M_n(K)^m]$ in the usual way.
Let $R_{n,m}(K)$ be the subalgebra of the invariants of the subgroup $SL_n(K)\times SL_n(K)$ of $G_n$. It is called also
the algebra of semi-invariants of $G_n$ on $M_n(K)^m$. The structure and minimal systems of generators of $R_{n,m}(K)$ are known in few cases only.
Over a field of characteristic 0 or $p>2$ the algebra $R_{2,m}(K)$ is minimally generated by the determinants $\det(A_r)$,
$r=1,\ldots,m$, the mixed discriminants $M(A_{r_1},A_{r_2})$, $1\leq r_1<r_2\leq m$, and the discriminants
$D(A_{r_1},A_{r_2},A_{r_3},A_{r_4})$, $1\leq r_1<r_2<r_3<r_4\leq m$, \cite{domokos:2000}
or \cite[Theorem 11.47]{mukai}.
Here $M(A_1,A_2)$ is defined as the coefficient of $t_1t_2$ in
\[
\det(t_1A_1+t_2A_2)=t_1^2\det(A_1^2)+t_1t_2M(A_1,A_2)+t_2^2\det(A_2),
\]
\[
D(A_1,A_2,A_3,A_4)=\left\vert\begin{matrix}
a_{11}^{(1)}&a_{11}^{(2)}&a_{11}^{(3)}&a_{11}^{(4)}\\
&&&\\
a_{21}^{(1)}&a_{21}^{(2)}&a_{21}^{(3)}&a_{21}^{(4)}\\
&&&\\
a_{12}^{(1)}&a_{12}^{(2)}&a_{12}^{(3)}&a_{12}^{(4)}\\
&&&\\
a_{22}^{(1)}&a_{22}^{(2)}&a_{22}^{(3)}&a_{22}^{(4)}\\
\end{matrix}\right\vert,
\]
where $A_r=\left(a_{ij}^{(r)}\right)_{2\times 2}$, $r=1,2,3,4$.
For $m\leq 4$ the generators $\det(A_r)$ and $M(A_{r_1},A_{r_2})$
are algebraically independent and for $m=4$ the algebra $R_{2,4}(K)$
is a free module over the polynomial subalgebra generated by them
with basis $1,D(A_1,A_2,A_3,A_4)$.
It is pointed out in  \cite{domokos-drensky}, \cite{mukai} that $R_{2,m}(K)$ can be interpreted
as the ring of vector invariants of the special orthogonal group of degree $4$.
Therefore the relations among the generators can be deduced from classically known results,
and even a Gr\"obner basis of the ideal of relations can be obtained from \cite{domokos-drensky}.
For $m=2$ and any $n$ the algebra $R_{n,2}$ is generated by the algebraically independent
coefficients of $\det(t_1A_1+t_2A_2)$, \cite{happel}, see also \cite{koike}.

Apart from the cases $n=2$ for any $m$ and $m=2$ for any $n$, the only other case when a minimal system
of  generators of $R_{n,m}(K)$ is explicitly known is $n=m=3$, and this algebra is the main object to study in the present paper.
In the sequel we denote
$R(K):=K[M_3(K)^3]^{SL_3\times SL_3}$, the algebra
of $SL_3(K)\times SL_3(K)$-invariant polynomial functions on $M_3(K)^3$.

Define polynomial functions $f_{i,j,k}$ on $M_3(K)^3$ by the equality
\[
\det(t_1A_1+t_2A_2+t_3A_3)=\sum_{i+j+k=3}t_1^it_2^jt_3^kf_{i,j,k}(A_1,A_2,A_3)
\]
for all $t_1,t_2,t_3\in K$ and $A_1,A_2,A_3\in M_3(K)$. Obviously
the ten polynomials $f_{i,j,k}$ belong to $R(K)$. Furthermore,
define $h$ as the coefficient of $t_1^2t_2^2t_3^2$ in
\[
\det\left(\begin{array}{cc}t_2A_2 & t_1A_1 \\t_1A_1 & t_3A_3\end{array}\right)
\]
and define $q$ as the coefficient of $t_1^2t_2t_3^2t_4t_5^2t_6$ in
\[\det\left(\begin{array}{ccc}0 & t_1A_1 & t_2A_2 \\t_4A_1 & 0 & t_3A_3 \\t_5A_2 & t_6A_3 & 0\end{array}\right).\]
Clearly $h$ and $q$ belong to $R(K)$. It is proved in
\cite{domokos} that $h$ and the ten polynomials $f_{i,j,k}$ (where $i+j+k=3$) constitute
a homogeneous system of parameters in $R(K)$. Denote by $P(K)$ the subalgebra generated
by these eleven algebraically independent elements.  In the case when the characteristic of the base field $K$ is zero,
using a result of Teranishi \cite{teranishi} it was established in \cite{domokos} that $R(K)$ is a free $P(K)$-module generated by $1$ and $q$:
\begin{equation}\label{eq:freemodule}
R(K)=P(K)\oplus P(K) q.
\end{equation}
 A similar description of $R(K)$ is stated by Mukai without proof in  \cite[Proposition 11.49]{mukai}.
It follows from (\ref{eq:freemodule}) that $q$ satisfies a monic quadratic relation with
coefficients from $P(K)$.
In the present paper we find the explicit form of this relation.

A crucial role in our considerations is played by the following right action of
the general linear group $GL_3(K)$ on $M_3(K)^3$:
For $g=(g_{ij})_{3\times 3}\in GL_3(K)$ and $(A_1,A_2,A_3)\in M_3(K)^3$ we have
\[
(A_1,A_2,A_3)\cdot g:=\left(\sum_{i=1}^3g_{i1}A_i,\sum_{i=1}^2g_{i2}A_i,\sum_{i=1}^3g_{i3}A_i\right).
\]
This induces a left  action of $GL_3(K)$ on the coordinate ring of $M_3(K)^3$: For a polynomial function
$f$ on $M_3(K)^3$ and $g\in GL_3(K)$, the function $g\cdot f$ maps
$(A_1,A_2,A_3)\in M_3(K)^3$ to
$f((A_1,A_2,A_3)\cdot g)$.
Since this action of $GL_3(K)$ commutes with the action of $G$ introduced above,
$R(K)$ is a $GL_3(K)$-submodule of the coordinate ring of $M_3(K)^3$.

First in Section~\ref{sec:charzero} we treat the case when $K$ is the field $\mq$ of rational numbers.
By the theory of polynomial representations of $GL_3(\mq)$
one can read off from (\ref{eq:freemodule}) that $h$ and $q$ can be replaced by
$H$ and $Q$ that are {\it highest weight vectors} with respect to $GL_3(\mq)$. In fact $H$ and
$Q$ are invariants with respect to the subgroup $SL_3(\mq)$ of $GL_3(\mq)$ and they are
uniquely determined up to non-zero scalar multiples. The relation among the new generators
$Q,H,f_{i,j,k}$ takes place in the subalgebra of $SL_3(\mq)$-invariants in $R(\mq)$. This is a
``small'' subalgebra of $R(\mq)$, and a ``large'' part of it  can be identified with the algebra of
$SL_3(\mc)$-invariants of ternary cubic forms, whose explicit generators $S$ and $T$ are known from a
famous classical computation of Aronhold \cite{aronhold}.
It is an easy matter to find $H$ and $Q$ explicitly, and then most of the computational difficulty in
finding the relation among $Q,H,f_{i,j,k}$ is already contained in Aronhold's computation, so one gets
 easily the desired relation (cf. Theorem~\ref{thm:charzero}).

Rewriting the relation found in Section~\ref{sec:charzero} in terms of our original generators
$q$,$h$, $f_{i,j,k}$, we obtain a relation $A(q,h,f_1,\ldots,f_{10})=0$
with integer coefficients.
This yields a uniform description for $R(K)$ in terms of a minimal generating system and the corresponding defining relations,
valid over any infinite base field $K$ and also for $K=\mz$, the ring of integers, see Theorem~\ref{thm:charfree}.

The results in Theorems~\ref{thm:charzero} and ~\ref{thm:charfree} can be applied to recover in a transparent way
known results in three other topics of independent interest.
In Remark~\ref{remark:jacobian} we mention the connection to the explicit determination of the Jacobian of a cubic curve, and to the description of
$SL_3(\mc)\times SL_3(\mc)\times SL_3(\mc)$-invariants of
tensors in $\mc^3\otimes \mc^3\otimes \mc^3$.
Furthermore, in Section~\ref{sec:conjugation}
we deduce from Theorem~\ref{thm:charfree} the explicit combinatorial description of the ring of conjugation invariants of pairs of $3\times 3$ matrices.
In particular, we recover the complicated relation due to Nakamoto \cite{nakamoto}
as a simple consequence of our results on $R(K)$.
In summary, the complicated relation mentioned above comes from the
simple relation in Theorem~\ref{thm:charzero} by specialization and change of variables.

Let us note finally that $R$ is an instance of a semi-invariant algebra of a quiver, and
Theorems~\ref{thm:charzero} and ~\ref{thm:charfree} give information on the homogeneous coordinate ring of the moduli space of semistable
$(3,3)$-dimensional representations (cf. \cite{king}) of the generalized Kronecker quiver with three arrows.

%%%%%%%%%%%%%%%%%%%%%%%%%%%%%%%%%%%

\section{Characteristic zero}\label{sec:charzero}

Throughout this section we assume that $K=\mq$, the field of rational numbers. (Everything
would hold for any characteristic zero base field.) To simplify notation set
$R:=R(\mq)$, $P:=P(\mq)$.
The homogeneous components of $R$ are polynomial $GL_3(\mq)$-modules.
Recall that given a representation of $GL_3(\mq)$ on some vector space and $\alpha\in\mz^3$ we say that a non-zero vector $v$
is a weight vector of weight $\alpha$ if
${\mathrm{diag}}(z_1,z_2,z_3)\cdot v=z_1^{\alpha_1}z_2^{\alpha_2}z_3^{\alpha_3}v$ for all
diagonal elements ${\mathrm{diag}}(z_1,z_2,z_3)\in GL_3(\mq)$.
A polynomial $GL_3(\mq)$-module is completely reducible, and the isomorphism classes of irreducible polynomial
$GL_3(\mq)$-modules are labeled by partitions $\lambda$ with at most three non-zero parts, i.e.,
$\lambda=(\lambda_1,\lambda_2,\lambda_3)\in\mn_0^3$ is a triple of non-negative integers with
$\lambda_1\geq\lambda_2\geq\lambda_3$.
Write $V_{\lambda}$ for the irreducible polynomial $GL_3(\mq)$-module corresponding to
$\lambda$. Given a polynomial representation of $GL_3(\mq)$, a weight vector is called a {\it highest weight vector}
if it is fixed by all unipotent upper triangular elements in $GL_3(\mq)$.
Then its weight is necessarily a partition $\lambda$, and it generates a $GL_3(\mq)$-submodule isomorphic to $V_{\lambda}$.

The $GL_3(\mq)$-module structure of $R$ is encoded in its
$3$-variable Hilbert series
\[
H(R;t_1,t_2,t_3):=\sum_{\alpha\in \mn_0^3}\dim_{\mq}(R_{\alpha})t_1^{\alpha_1}t_2^{\alpha_2}t_3^{\alpha_3}\in\mz[[t_1,t_2,t_3]],
\]
where $R_{\alpha}$ denotes the $\alpha$ weight subspace of $R$. From 
(\ref{eq:freemodule}) we know that
\[
H(R;t_1,t_2,t_3)
=\frac{1+t_1^3t_2^3t_3^3}{(1-t_1^2t_2^2t_3^2)\prod_{i+j+k=3}(1-t_1^it_2^jt_3^k)}.
\]
This shows that up to degree $\leq 6$, the homogeneous components of $R$ coincide with those of $P$.
Hence $P$ is a $GL_3(\mq)$-submodule in $R$. Denote by $P_0$ the subalgebra of $P$ generated by the $f_{i,j,k}$.
For $g=(g_{ij})_{3\times 3}\in GL_3(\mq)$ we have
\begin{eqnarray*}
\sum_{i+j+k=3}t_1^it_2^jt_3^k f_{i,j,k}((A_1,A_2,A_3)\cdot g)=
\det\left(\sum_{j=1}^3\left(t_j\sum_{i=1}^3g_{ij}A_i\right)\right)
=\det\left(\sum_{i=1}^3\left(\sum_{j=1}^3g_{ij}t_j\right)A_i\right)\\
=\sum_{l+m+n=3}f_{l,m,n}(A_1,A_2,A_3)\left(\sum_{r=1}^3g_{1r}t_r\right)^l\left(\sum_{r=1}^3g_{2r}t_r\right)^m
\left(\sum_{r=1}^3g_{3r}t_r\right)^n,
\end{eqnarray*}
hence
\begin{equation}\label{eq:actiononfijk}
\sum_{i+j+k=3}(g\cdot f_{i,j,k})t_1^it_2^jt_3^k
=\sum_{l+m+n+3}f_{l,m,n}(A_1,A_2,A_3)\left(\sum_{r=1}^3g_{1r}t_r\right)^l\left(\sum_{r=1}^3g_{2r}t_r\right)^m
\left(\sum_{r=1}^3g_{3r}t_r\right)^n.
\end{equation}
So the $f_{i,j,k}$ span a $GL_3(\mq)$-submodule, hence $P_0$ is also a $GL_3(\mq)$-submodule,
and we see from the Hilbert series that the degree $6$ homogeneous component of $P_0$ has a $GL_3(\mq)$-module
direct complement in the degree $6$ homogeneous component of $P$ isomorphic to  $V_{(2,2,2)}$.
Taking the multidegree into account we conclude that there exist unique scalars $\beta_i$, $i=1,2,3,4$, such that
$H:=h+\beta_1f_{2,1,0}f_{0,1,2}+\beta_2f_{2,0,1}f_{0,2,1}+\beta_3 f_{1,2,0}f_{1,0,2}+
\beta_4 f_{1,1,1}^2$ is a highest weight vector (and hence spans the submodule $V_{(2,2,2)}$
mentioned above). To find the values $\beta_i$ note that
\[
\left(\begin{array}{ccc}1 & 1 & 0 \\0 & 1 & 0 \\0 & 0 & 1\end{array}\right)\quad
\mbox{ and }\quad \left(\begin{array}{ccc}1 & 0 & 0 \\0 & 1 & 1 \\0 & 0 & 1\end{array}\right)
\]
generate a Zariski dense subgroup in the subgroup of unipotent upper triangular matrices in $GL_3(\mq)$.
Therefore the condition that $H$ is a highest weight vector is equivalent to the condition that the above two elements of $GL_3(\mq)$ fix $H$.
This gives a system of linear equations for the $\beta_i$, that can be easily solved
(we used {\cocoa} \cite{cocoa}), and we get that
\begin{equation}\label{eq:H}
H=h-\frac 13f_{2,1,0}f_{0,1,2}-\frac 13f_{2,0,1}f_{0,2,1}+\frac 23 f_{1,2,0}f_{1,0,2}+
\frac 1{12}f_{1,1,1}^2.
\end{equation}
Denote by $P_+$ the sum of the positive degree homogeneous components of $P$.
Then by the above considerations we know that $P_+R$ is a $GL_3(\mq)$-submodule in
$R$, and the Hilbert series of $R$ shows that $P_+R$ has a $GL_3(\mq)$-module direct complement isomorphic
to $V_{(0)}\oplus V_{(3,3,3)}$ (where $V_{(0)}$ is the trivial
$GL_3(\mq)$-module). Consequently, there is a unique weight vector $v$ in $P$ of weight $(3,3,3)$
(i.e., a multihomogeneous element of multidegree $(3,3,3)$)
such that $Q:=q+v$ is a highest weight vector (and hence spans the submodule $V_{(3,3,3)}$ mentioned above).
Solving a small system of linear equations as in the case of $H$ we obtain
\begin{eqnarray}\label{eq:Q}
Q=&{}& q-\frac 12 h f_{1,1,1}+\frac 32f_{3,0,0}f_{0,3,0}f_{0,0,3}
\\ \notag &{}& -\frac 12f_{3,0,0}f_{0,2,1}f_{0,1,2}-\frac 12 f_{0,3,0}f_{2,0,1}f_{1,0,2}-\frac 12 f_{0,0,3}f_{2,1,0}f_{1,2,0}
\\ \notag &{}& -\frac 12f_{1,1,1}f_{1,2,0}f_{1,0,2}+\frac 12f_{2,1,0}f_{1,0,2}f_{0,2,1}+\frac 12f_{1,2,0}f_{2,0,1}f_{0,1,2}.
\end{eqnarray}

It follows from (\ref{eq:H}), (\ref{eq:Q}), (\ref{eq:freemodule}) that $Q$, $H$ and the $f_{i,j,k}$ constitute a minimal generating system of $R$,
and that $Q$ satisfies a monic quadratic polynomial with coefficients in $P$.
Note that $Q,H\in R^{SL_3(\mq)}$.
Next we introduce two other distinguished elements $\widetilde{S}$, $\widetilde{T}$ in
$R^{SL_3(\mq)}$.
Equation (\ref{eq:actiononfijk}) shows  that the algebraically independent invariants $f_{i,j,k}$ span
a $GL_3(\mq)$-submodule in $R$ isomorphic to the dual of the space of ternary cubic forms, hence
$P_0^{SL_3(\mq)}$ is isomorphic to the algebra of $SL_3(\mq)$-invariants of ternary cubic forms.
The latter was determined in \cite{aronhold}, and is generated by two algebraically independent elements $S$ and $T$.
Here $S$ and $T$ are homogeneous polynomials of degree four and six in the coefficients of the
general cubic ternary form
\[aX^3+bY^3+cZ^3+3a_2X^2Y+3a_3X^2Z+3b_1XY^2+3b_3Y^2Z+3c_1XZ^2+3c_2YZ^2+6mXYZ.\]
The expressions $S$ and $T$ can be found  in \cite{salmon},  in \cite[page 160]{dolgachev}, or in \cite{anetal}.
Now substitute in $S$ and $T$ the coefficients of the general ternary form by the $f_{i,j,k}$ to get
elements  $\widetilde{S}$ and $\widetilde{T}$ in $P_0$. The exact substitution is
given by the following table:
\[
\begin{tabular}{c|c|c|c|c|c|c|c|c|c|c}
$a$ & $a_2$ & $a_3$ & $b$ &$ b_1$ & $b_3$  & $c$ & $c_1$ & $c_2$ & $m$ \\
\hline
$f_{3,0,0}$ & $\frac 13f_{2,1,0}$ & $\frac 13f_{2,0,1}$ & $f_{0,3,0}$ & $\frac 13f_{1,2,0}$ &
$\frac 13f_{0,2,1}$ & $f_{0,0,3}$ & $\frac 13f_{1,0,2}$ & $\frac 13f_{0,1,2}$ &
$\frac 16f_{1,1,1}$
\end{tabular} \]
By (\ref{eq:actiononfijk}) this substitution induces a $GL_3(\mq)$-module isomorphism from the dual
of the space of ternary cubic forms to the subspace of $R$ spanned by the $f_{i,j,k}$. Hence
$\widetilde{S}$, $\widetilde{T}$ are $SL_3(\mc)$-invariants in $P_0$, and
by \cite{aronhold} we have  $P_0^{SL_3(\mc)}=\mc[\widetilde{S},\widetilde{T}]$.
Moreover, since $H$ is $SL_3(\mq)$-invariant, we have
$P^{SL_3(\mq)}=(P_0[H])^{SL_3(\mq)}=P_0^{SL_3(\mq)}[H]=\mq[\widetilde{S},\widetilde{T},H]$, a three-variable polynomial algebra.
Since $Q$ is also $SL_3(\mq)$-invariant, we conclude from
$R=P\oplus P\cdot Q$ that
\[
R^{SL_3(\mq)}=P^{SL_3(\mq)}\oplus Q\cdot P^{SL_3(\mq)}=
\mq[\widetilde{S},\widetilde{T},H]\oplus Q\cdot \mq[\widetilde{S},\widetilde{T},H].
\]
Taking the degrees into account it follows that
$Q^2=\alpha H^3+\beta H\widetilde{S}+ \gamma\widetilde{T}$ for some unique scalars
$\alpha,\beta,\gamma\in\mq$.
The scalars  can be easily found by substituting special matrix triples into the above equality:
on skew-symmetric triples all the $f_{i,j,k}$ vanish, hence $\widetilde{T}$ and $\widetilde{S}$ vanish.
On the other hand, the value of $H$ on the triple
\[
\left(\left(\begin{array}{ccc}0 & -x_1 & -y_1 \\x_1 & 0 & -z_1 \\ y_1 &  z_1 & 0\end{array}\right),
\left(\begin{array}{ccc}0 & -x_2 & -y_2 \\ x_2 & 0 & -z_2 \\ y_2 &  z_2 & 0\end{array}\right),
\left(\begin{array}{ccc}0 & -x_3 & -y_3 \\ x_3 & 0 & -z_3 \\ y_3 & z_3 & 0\end{array}\right)\right)
\]
is
$\det^2\left(\begin{array}{ccc}x_1 & x_2 & x_3 \\y_1 & y_2 & y_3 \\z_1 & z_2 & z_3\end{array}\right)$, whereas the value of $Q$ on this triple is
$\det^3\left(\begin{array}{ccc}x_1 & x_2 & x_3 \\y_1 & y_2 & y_3 \\z_1 & z_2 & z_3\end{array}\right)$.
This shows that $\alpha=1$.
Note that
\[
\det\left(\begin{array}{ccc}t_1 & t_3 & 0 \\0 & at_1+t_2 & -bt_1+t_3 \\ bt_1+t_3 & 0 & -at_1+t_2\end{array}\right)
=t_3^3+t_2^2t_1-b^2t_1^2t_3-a^2t_1^3
\]
(the Weierstrass canonical form of a plane cubic in homogeneous coordinates $(t_1:t_2:t_3)$ on
${\mathbb{P}}^2$).
The values of the invariants $\widetilde{S}$, $\widetilde{T}$, $H$, $Q$ on
the corresponding matrix triple
\[
\left(\left(\begin{array}{ccc}1 & 0 & 0 \\ 0 & a & -b \\ b &  0 & -a\end{array}\right),
\left(\begin{array}{ccc}0 & 0 & 0 \\ 0 & 1  & 0 \\ 0 &  0 & 1\end{array}\right),
\left(\begin{array}{ccc}0 & 1 & 0 \\ 0 & 0 & 1 \\ 1 & 0 & 0\end{array}\right)\right)
\]
are $-b^2/27$, $-4a^2/27$, $-b$, $-a$. It follows that
$\beta=27$ and $\gamma=-27/4$.
Hence we proved the following:

\begin{theorem}\label{thm:charzero}
We have the equality
\begin{equation}\label{eq:quadrel}
Q^2=H^3+27H\widetilde{S}-\frac{27}{4}\widetilde{T}
\end{equation}
where $H$ and $Q$ are given explicitly in (\ref{eq:H}), (\ref{eq:Q}), and they are characterized
(up to non-zero scalar multiples) in $R$ as the unique degree $6$ and degree $9$
$SL_3(\mq)$-invariants in $R$.
\end{theorem}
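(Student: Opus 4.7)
The plan is to exploit the structural decomposition
\[R^{SL_3(\mq)} = \mq[\widetilde{S},\widetilde{T},H] \oplus Q\cdot \mq[\widetilde{S},\widetilde{T},H]\]
already established in the preceding discussion, reduce $Q^2$ to a three-parameter ansatz by a weight argument, and then pin down the three scalars by substitution into carefully chosen test matrix triples.

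First, since $Q \in R^{SL_3(\mq)}$, so is $Q^2$, and I would write it uniquely as $f + Qg$ with $f, g \in \mq[\widetilde{S},\widetilde{T},H]$. A multidegree parity argument kills the $Qg$ summand: the generators $\widetilde{S}, \widetilde{T}, H$ have multidegrees $(4,4,4), (6,6,6), (2,2,2)$, so every monomial $\widetilde{S}^a \widetilde{T}^b H^c$ has multidegree $2(2a+3b+c)\cdot(1,1,1)$, whose components are all even. However $g$ would need multidegree $(6,6,6) - (3,3,3) = (3,3,3)$, whose components are all odd, forcing $g = 0$. Listing the non-negative integer solutions of $2a + 3b + c = 3$ yields exactly three monomials of multidegree $(6,6,6)$, namely $H^3$, $H\widetilde{S}$, and $\widetilde{T}$, so
\[Q^2 = \alpha H^3 + \beta H\widetilde{S} + \gamma \widetilde{T}\]
for unique $\alpha, \beta, \gamma \in \mq$.

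Next I would determine the three coefficients by substituting test triples chosen to kill several invariants. On a generic skew-symmetric triple parametrized by variables $x_i, y_i, z_i$ for $i = 1,2,3$, every $f_{i,j,k}$ vanishes, hence so do $\widetilde{S}$ and $\widetilde{T}$; direct evaluation of the explicit formulas (\ref{eq:H}) and (\ref{eq:Q}) shows that $H$ and $Q$ are the square and the cube of the $3 \times 3$ determinant formed from the parameters, yielding $\alpha = 1$. For $\beta$ and $\gamma$ I would use the Weierstrass-form triple displayed in the text, whose associated ternary cubic is $t_3^3 + t_2^2 t_1 - b^2 t_1^2 t_3 - a^2 t_1^3$: the classical Aronhold formulas give $\widetilde{S} = -b^2/27$ and $\widetilde{T} = -4a^2/27$, while the triple itself evaluates to $H = -b$ and $Q = -a$; matching the coefficients of $b^3$ and of $a^2$ in the resulting putative identity produces the two linear conditions $\beta/27 = 1$ and $-4\gamma/27 = 1$, hence $\beta = 27$ and $\gamma = -27/4$.

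The main technical obstacle is the explicit evaluation of $\widetilde{S}, \widetilde{T}$ on the Weierstrass triple: this requires Aronhold's classical quartic and sextic invariants of a ternary cubic together with the substitution table translating them into polynomials in the $f_{i,j,k}$, and one must also evaluate the eight-term expression (\ref{eq:Q}) for $Q$ on the same triple. These computations are mechanical but tedious; by contrast the skew-symmetric evaluation is essentially free since every $f_{i,j,k}$ vanishes, and the reduction to the three-term ansatz above is immediate from the weight argument. A computer-algebra verification (for instance with {\cocoa}) provides a final cross-check on the three coefficients.
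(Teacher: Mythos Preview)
Your proposal is correct and follows essentially the same route as the paper: the same structural decomposition of $R^{SL_3(\mq)}$, the same degree/weight reduction to the three-term ansatz $Q^2=\alpha H^3+\beta H\widetilde{S}+\gamma\widetilde{T}$, and the same two test substitutions (the skew-symmetric triple for $\alpha$, the Weierstrass triple for $\beta$ and $\gamma$). Your explicit parity argument eliminating the $Qg$ summand is a welcome expansion of what the paper compresses into the phrase ``taking the degrees into account.''
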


\begin{remark}\label{remark:jacobian} {\rm
(i) The relation (\ref{eq:quadrel}) essentially coincides with the relation
\[J^2=4\Theta^3+108S\Theta H^4-27TH^6\]
among the basic covariants of plane cubics (cf. \cite{salmon}).
It was observed by Weil \cite{weil} that this gives the equation of the Jacobian of a plane cubic,
see \cite{anetal} for a proof.
As it is pointed out in the book of Mukai \cite[page 430]{mukai},  an alternative approach to this result can be based
on the study of $R$ and its defining relation  (\ref{eq:quadrel}).
 We mention that the results on the equation of the Jacobian of a plane cubic are extended to arbitrary characteristic
 (including $2$ and $3$) in \cite{artinetal}. Our results in Section~\ref{sec:integers} have relevance for this.

(ii) The algebra $R^{SL_3(\mq)}$ can be identified with the algebra of $SL_3(\mq)\times SL_3(\mq)\times SL_3(\mq)$-invariants
on $\mq^3\otimes\mq^3\otimes \mq^3$.
The arguments above show that this is a three-variable polynomial algebra generated by
$\widetilde{S}$, $H$, and $Q$. This result is well known, see \cite{chanler},  \cite{vinberg:1},
\cite{vinberg:2}, \cite{thibon}.  Our results provide an alternative proof, and an alternative interpretation of the basic invariants.}
\end{remark}

%%%%%%%%%%%%%%%%%%%%%%%%%%%%
\section{Relation over the integers}\label{sec:integers}

To shorten the expressions, set
\begin{eqnarray*}
f_1:=f_{3,0,0},\  f_2:=f_{2,1,0}, \ f_3:=f_{2,0,1}, \ f_4:=f_{1,2,0}, \ f_5:=f_{1,1,1},\\
f_6:=f_{1,0,2}, \ f_7:=f_{0,3,0}, \ f_8:=f_{0,2,1}, \ f_9:=f_{0,1,2}, \ f_{10}:=f_{0,0,3}.
\end{eqnarray*}

It turns out that $Q^2-H^3-27H\widetilde{S}+\frac{27}{4}\widetilde{T}=A(q,h,f_1,\ldots,f_{10})$,
where $A$ is a $12$-variable polynomial with integer coefficients,
given explicitly as follows:
\begin{eqnarray*}
&{}&A(q,h,f_1,\ldots,f_{10})=
q^2 - qhf_5
\\&{}& + 3qf_1f_7f_{10} - qf_1f_8f_9 - qf_2f_4f_{10} + qf_2f_6f_8 + qf_3f_4f_9 - qf_3f_6f_7 - qf_4f_5f_6
\\&{}& - h^3 + h^2f_2f_9 + h^2f_3f_8 - 2h^2f_4f_6
\\&{}& + 3hf_1f_4f_8f_{10} - hf_1f_4f_9^2 - 6hf_1f_5f_7f_{10} + hf_1f_5f_8f_9 + 3hf_1f_6f_7f_9 - hf_1f_6f_8^2
\\&{}& - hf_2^2f_8f_{10} + 3hf_2f_3f_7f_{10} - hf_2f_3f_8f_9 + hf_2f_4f_5f_{10} + hf_2f_4f_6f_9 - hf_2f_6^2f_7
\\&{}& - hf_3^2f_7f_9 - hf_3f_4^2f_{10} + hf_3f_4f_6f_8 + hf_3f_5f_6f_7 - hf_4^2f_6^2 + 9f_1^2f_7^2f_{10}^2
\\&{}& - 6f_1^2f_7f_8f_9f_{10} + f_1^2f_7f_9^3 + f_1^2f_8^3f_{10} - 6f_1f_2f_4f_7f_{10}^2 + f_1f_2f_4f_8f_9f_{10}
\\&{}& + 3f_1f_2f_5f_7f_9f_{10} - f_1f_2f_5f_8^2f_{10} + 3f_1f_2f_6f_7f_8f_{10} - 2f_1f_2f_6f_7f_9^2
\\&{}& + 3f_1f_3f_4f_7f_9f_{10} - 2f_1f_3f_4f_8^2f_{10} + 3f_1f_3f_5f_7f_8f_{10} - f_1f_3f_5f_7f_9^2 - 6f_1f_3f_6f_7^2f_{10}
\\&{}& + f_1f_3f_6f_7f_8f_9 + f_1f_4^3f_{10}^2 - f_1f_4^2f_5f_9f_{10} + f_1f_4^2f_6f_8f_{10} + f_1f_4f_5^2f_8f_{10}
\\&{}& - 3f_1f_4f_5f_6f_7f_{10} + f_1f_4f_6^2f_7f_9 - f_1f_5^3f_7f_{10} + f_1f_5^2f_6f_7f_9 - f_1f_5f_6^2f_7f_8 + f_1f_6^3f_7^2
\\&{}& + f_2^3f_7f_{10}^2 - 2f_2^2f_3f_7f_9f_{10} + f_2^2f_3f_8^2f_{10} - f_2^2f_4f_6f_8f_{10} - f_2^2f_5f_6f_7f_{10} + f_2^2f_6^2f_7f_9
\\&{}& - 2f_2f_3^2f_7f_8f_{10} + f_2f_3^2f_7f_9^2 - f_2f_3f_4f_5f_8f_{10} + 4f_2f_3f_4f_6f_7f_{10} + f_2f_3f_5^2f_7f_{10}
\\&{}&  - f_2f_3f_5f_6f_7f_9 + f_2f_4^2f_5f_6f_{10} - f_2f_4f_6^3f_7 + f_3^3f_7^2f_{10} + f_3^2f_4^2f_8f_{10} - f_3^2f_4f_5f_7f_{10}
\\&{}& - f_3^2f_4f_6f_7f_9 - f_3f_4^3f_6f_{10} + f_3f_4f_5f_6^2f_7.
\end{eqnarray*}

For $i,j,k\in\{1,2,3\}$ denote by $x_{ij}^{(r)}$ the coordinate function on $M_3(\mq)^3$ mapping the
matrix triple $(A_1,A_2,A_3)$ to the $(i,j)$-entry of $A_r$. Then $R(\mq)$ contains the subring
 \[R(\mz):=R(\mq)\cap\mz[x_{ij}^{(r)}\mid i,j,r=1,2,3].\]

\begin{theorem}\label{thm:charfree}
Let $K$ be an infinite field or the ring of integers.
Then $R(K)$ is minimally generated as a $K$-algebra by the twelve elements $q,h,f_j$, $j=1,\ldots,10$,
satisfying the single algebraic relation
$A(q,h,f_1,\ldots,f_{10})=0$ (where $A$ is given explicitly above). Moreover, $R(K)$ is a free module with basis $1,q$
over its $K$-subalgebra generated by the eleven algebraically independent elements $h,f_1,\ldots,f_{10}$.
\end{theorem}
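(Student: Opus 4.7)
The strategy is to bootstrap from Theorem~\ref{thm:charzero} to arbitrary $K$ by exploiting the integrality of the defining polynomial $A$.

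\emph{Step 1 (Computing $A$ and verifying its integrality).} Using \eqref{eq:H}, \eqref{eq:Q}, and the Aronhold substitution table giving $\widetilde{S}, \widetilde{T}$ as polynomials in the $f_{i,j,k}$, I would expand $Q^2 - H^3 - 27H\widetilde{S} + \frac{27}{4}\widetilde{T}$ as a polynomial in the twelve formal variables $q, h, f_1, \ldots, f_{10}$. This is a routine \cocoa calculation; the decisive observation to verify is that all apparent denominators $\frac{1}{3}, \frac{1}{12}, \frac{1}{2}, \frac{27}{4}$ cancel, producing the integer polynomial $A$ displayed in the statement. Since $q, h, f_i$ have integer coefficients as polynomials in $x_{ij}^{(r)}$ and $A \in \mz[q, h, f_1, \ldots, f_{10}]$, the identity $A(q, h, f_1, \ldots, f_{10}) = 0$ --- valid in $\mq[x_{ij}^{(r)}]$ by Theorem~\ref{thm:charzero} --- actually holds in $\mz[x_{ij}^{(r)}]$, and hence in $R(K)$ for every commutative ring $K$.

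\emph{Step 2 (Universal presentation).} Let $B := \mz[y_0, y_1, \ldots, y_{10}, z]/(A(z, y_0, \ldots, y_{10}))$, multigraded so that the evaluation map $\varphi \colon B \to R(\mz)$ sending $y_0 \mapsto h$, $y_j \mapsto f_j$, $z \mapsto q$ is multihomogeneous. Because $A$ is monic quadratic in $z$, $B$ is free of rank $2$ over $\mz[y_0, \ldots, y_{10}]$ with basis $\{1, z\}$; in particular $B$ is $\mz$-flat and has the same multigraded Hilbert series as $R(\mq)$ recorded in Section~\ref{sec:charzero}. By Theorem~\ref{thm:charzero}, the map $\varphi_\mq \colon B \otimes_\mz \mq \to R(\mq)$ is a multigraded isomorphism.

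\emph{Step 3 (Extension to arbitrary $K$).} By $\mz$-flatness of $B$, the base change $B \otimes_\mz K$ has the same multigraded Hilbert series as $R(\mq)$. Hence the map $\varphi_K \colon B \otimes_\mz K \to R(K)$ will be a multigraded isomorphism provided (a) $\varphi_K$ is surjective --- equivalently, $q, h, f_1, \ldots, f_{10}$ generate $R(K)$ as a $K$-algebra --- and (b) the multigraded Hilbert series of $R(K)$ equals that of $R(\mq)$; given (a) and (b), injectivity of $\varphi_K$ follows by $K$-dimension count in each multidegree. Theorem~\ref{thm:charfree} then drops out: $A = 0$ is the unique defining relation; $h, f_1, \ldots, f_{10}$ are algebraically independent (since they are so in $B$); and $R(K) = K[h, f_1, \ldots, f_{10}] \oplus K[h, f_1, \ldots, f_{10}] \cdot q$. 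The case $K = \mz$ is treated analogously, via the corresponding generation and Hilbert-series invariance of $R(\mz) = R(\mq) \cap \mz[x_{ij}^{(r)}]$.

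\emph{Main obstacle.} The heart of the proof is verifying (a) and (b) in arbitrary characteristic. The HSOP result of \cite{domokos} --- valid over any infinite $K$ --- already implies $R(K)$ is module-finite over $K[h, f_1, \ldots, f_{10}]$, and the monic quadratic relation $A$ bounds the fraction-field extension degree by $2$. To secure (a) and (b), I would invoke characteristic-free results on semi-invariants of quivers ($R$ is the semi-invariant algebra of the generalized Kronecker quiver with three arrows and dimension vector $(3, 3)$, for which Derksen--Weyman supply a characteristic-free spanning set), combined with standard base-change results for invariants of geometrically reductive group schemes ($SL_3 \times SL_3$ over $\mz$, by Haboush's theorem), which together yield $R(\mz) \otimes_\mz K \cong R(K)$ for every infinite field $K$, simultaneously giving (a) and (b).
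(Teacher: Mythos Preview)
Your overall architecture is sound and close to the paper's: establish the integral relation $A=0$, then show the map $\varphi_K\colon B\otimes K\to R(K)$ is an isomorphism by combining a Hilbert-series comparison with one of injectivity/surjectivity.  For the Hilbert-series invariance (your~(b)) the paper invokes Donkin's good-filtration theory \cite{donkin}; your appeal to ``base-change for geometrically reductive groups'' lands in the same place, though Haboush's theorem per se gives only geometric reductivity, not the base-change statement $R(\mz)\otimes K\cong R(K)$---that really is a good-filtration result.

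The gap is in securing (a).  The Derksen--Weyman/Domokos--Zubkov spanning results produce an \emph{a priori} infinite family of determinantal semi-invariants (polarizations of $\det$ of $3k\times 3k$ block matrices for all $k$); they do not tell you that the twelve specific elements $q,h,f_1,\ldots,f_{10}$ already generate $R(K)$ in positive characteristic.  Your alternative---deduce~(a) from $R(\mz)\otimes K\cong R(K)$---is circular, since it presupposes that $R(\mz)$ is generated by the twelve elements, i.e.\ the $K=\mz$ case of the theorem you are proving; and ``treated analogously'' does not supply an independent proof of that case.  The paper sidesteps this by proving \emph{injectivity} rather than surjectivity: a one-line evaluation on a basis of skew-symmetric $3\times 3$ matrices shows that all $f_i$ vanish while $q$ does not, so $q\notin P(K)$ over any field; since $P(K)$ is integrally closed and $q$ is integral over it, this forces $P(K)\oplus P(K)q\hookrightarrow R(K)$, and then~(b) upgrades the inclusion to equality.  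The $\mz$ case is then obtained from the field cases by a short torsion argument (take $f\in R(\mz)$, write $mf\in P(\mz)\oplus P(\mz)q$ with $m$ minimal, and reduce modulo a prime divisor of $m$).  The missing ingredient in your plan is precisely this elementary verification that $q\notin P(K)$; without it, neither~(a) nor the $\mz$ case closes.
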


\begin{proof}
We know already from \cite{domokos} and  Section~\ref{sec:charzero} that the statement holds when $K$ is a field of characteristic zero.
We also know already that for any $K$, the given twelve elements satisfy the relation
$A(q,h,f_1,\ldots,f_{10})=0$.

Suppose next that $K$ is an infinite field of positive characteristic. We claim that $1$ and $q$ generate
a free $P(K)$-submodule in $R(K)$. Indeed, otherwise $q$ belongs to the field of fractions of $P(K)$.
By the above relation $q$ is integral over $P(K)$. Since $P(K)$ is a unique factorization domain,
it follows that $q$ belongs to $P(K)$. Taking the grading of $R$ into account, we conclude that
$q=hc+d$, where $c$ is a linear combination of the $f_i$, and $d$ is a cubic polynomial in the $f_i$.
Now substitute into this equality a triple $(A_1,A_2,A_3)$, where the $A_i$ constitute a basis of the
space of $3\times 3$ skew-symmetric matrices. All the $f_i$ vanish on this triple, hence $(hc+d)(A_1,A_2,A_3)=0$,
whereas $q$ does not vanish on this triple as we pointed out in Section~\ref{sec:charzero}.
So $P(K)\oplus P(K)q\subseteq R(K)$. It follows from the theory of modules with good filtration
(cf. \cite[page 399]{donkin}) that the Hilbert series of $R(K)$ coincides with the
Hilbert series of $R(\mq)$. We know already that the latter coincides with the Hilbert series of
$P(K)\oplus P(K)q$, hence we have the equality $R(K)=P(K)\oplus P(K)q$.
This shows both the statement on the generators and the relation.

Finally we turn to $R(\mz)$. Denote by $P(\mz)$ the $\mz$-subalgebra of $R(\mz)$ generated by the eleven elements $h,f_1,\ldots,f_{10}$.  From the case $K=\mq$ we know that $P(\mz)$ is a polynomial ring,   and $R(\mz)$ contains the free
$P(\mz)$-submodule $M:=P(\mz)\oplus P(\mz)q$. Take any $f\in R(\mz)$.
It follows from the case $K=\mq$ that some positive integer multiple $mf$ of $f$ belongs to $M$,
so $mf=c+dq$, where $c,d\in P(\mz)$. We may assume that $m$ is minimal. If $m\neq 1$, then let $p$ be a prime divisor of $m$,
and let $L$ be an infinite field of characteristic $p$.
Reduction mod $p$ of coefficients gives a ring homomorphism
$\pi:Z:=\mz[x_{ij}^{(r)}\mid i,j,r=1,2,3]\to L[M_3(L)^3]$, and this restricts to a ring homomorphism
$\pi:R(\mz)\to R(L)$ and $\pi:P(\mz)\to P(L)$. Since $\pi(mf)=0$, we get that $\pi(c)+\pi(d)q=0$
holds in $R(L)$. From the case $K=L$ of our Theorem we know that $1,q$ are independent over $P(L)$,
hence $\pi(c)=0$ and $\pi(d)=0$, i.e. $c,d\in P(\mz)\cap pZ$. Clearly $P(\mz)\cap pZ=pP(\mz)$, since the eleven generators
of $P(\mz)$ are mapped under $\pi$ to algebraically independent
elements of $R(L)$. Consequently, $(m/p)f\in M$, contradicting the minimality of $m$.
Thus we have proved the equality $R(\mz)=P(\mz)\oplus P(\mz)q$.
This implies both the statement on the generators of $R(\mz)$ and the statement on the relation.
\end{proof}

\begin{remark}\label{remark:accidental} {\rm
The fact that a minimal $\mz$-algebra generating system of $R(\mz)$ stays a minimal $K$-algebra
generating system of $R(K)$ when exchanging the base ring to any infinite field $K$ is accidental,
and the analogous property does not hold in general in similar situations.
For example, denote by $R_{n,m}(K)$ the ring of $SL_n\times SL_n$-invariants of $m$-tuples of
$n\times n$ matrices.
It is proved in \cite{domokos-zubkov} that the method we used to construct generators
in the special case $n=m=3$ (i.e. polarization of the determinant of block matrices) yields in general
an (infinite) generating system of $R_{n,m}(K)$ for any infinite base field $K$, hence also for $K=\mz$.
(The latter claim follows in the same way as it is explained  by Donkin \cite{donkin} in a related situation.)
However, Proposition~\ref{prop:specI} below and the results of \cite{domokos-kuzmin-zubkov} imply that if $m$
is sufficiently large, then a minimal $\mz$-algebra generating system of $R_{n,m}(\mz)$ becomes redundant over
fields $K$ whose characteristic is zero or greater than $n$.}
\end{remark}

%%%%%%%%%%%%%%%%%%%%%%%%%%%%%%%%%%

\section{Conjugation invariants of pairs of $3\times 3$ matrices}\label{sec:conjugation}

The general linear group $GL_3(K)$ acts on $M_3(K)^2=M_3(K)\oplus M_3(K)$ by simultaneous conjugation:
For $g\in GL_3(K)$ and $A,B\in M_3(K)$ we set
$g \cdot (A,B)=(gAg^{-1},gBg^{-1})$.
For any infinite field $K$ denote by $U(K):=K[M_3(K)^2]^{GL_3(K)}$ the corresponding algebra of invariants.
Similarly to Section~\ref{sec:integers}, consider
\[
U(\mz):=U(\mq)\cap\mz[x_{ij}^{(r)}\mid i,j=1,2,3;\ r=1,2]
\]
where $x_{ij}^{(r)}$ is the coordinate function assigning to the pair $(A_1,A_2)\in M_3(\mq)^2$ the $(i,j)$-entry of $A_r$.
A minimal system of generators of $U(K)$ was given by Teranishi \cite{teranishi} when
${\mathrm{char}}(K)=0$; Nakamoto \cite{nakamoto} extended the result for any infinite base field
$K$ or $K=\mathbb{Z}$, and determined
the single defining relation among the generators.
An exact description of $U(K)$ can also be obtained  from Theorem~\ref{thm:charfree},
using
the following statement (proved in \cite[Proposition 4.1]{domokos}):

\begin{proposition}\label{prop:specI}
The specialization $x_{ij}^{(3)}\mapsto \delta^i_j$ (where $\delta^i_j=1$ if $i=j$ and
$\delta^i_j=0$ otherwise) induces a surjection
$\varphi:R(K)\to U(K)$.
\end{proposition}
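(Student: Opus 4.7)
The plan has two parts: showing $\varphi(R(K)) \subseteq U(K)$, and producing for each element of $U(K)$ a preimage in $R(K)$. For the inclusion, I would observe that the closed subvariety $Y := \{(A_1, A_2, I) : A_1, A_2 \in M_3(K)\} \subset M_3(K)^3$ is preserved by $(g,h) \in SL_3(K) \times SL_3(K)$ exactly when $g h^{-1} = I$, so its stabilizer is the diagonal copy of $SL_3(K)$, which acts on $Y \cong M_3(K)^2$ by simultaneous conjugation. Restriction of any $F \in R(K)$ to $Y$ therefore yields a polynomial invariant under $SL_3(K)$-conjugation. Since scalar matrices act trivially by conjugation, $SL_3(K)$- and $GL_3(K)$-conjugation invariants on $M_3(K)^2$ coincide (using Zariski density of $SL_3(K)\cdot K^\times$ in $GL_3$ over the infinite field $K$), so $\varphi(F) \in U(K)$.

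For surjectivity, my strategy is to lift each $f \in U(K)$ by homogenizing with respect to $A_3$. Decomposing $f = \sum_d f_d$ into homogeneous components of total degree $d$ in the entries of $(A_1, A_2)$, set
\[
F_d(A_1, A_2, A_3) := \det(A_3)^d \, f_d(A_1 A_3^{-1}, A_2 A_3^{-1}),
\]
a priori a rational function defined where $\det(A_3) \neq 0$. Writing $A_3^{-1} = \det(A_3)^{-1} \operatorname{adj}(A_3)$, each entry of $A_i A_3^{-1}$ equals $\det(A_3)^{-1}$ times a polynomial in the entries of $A_i$ and $A_3$, and homogeneity of $f_d$ makes the prefactor $\det(A_3)^d$ clear all denominators, so $F_d$ extends to a polynomial on $M_3(K)^3$.

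It remains to check $SL_3 \times SL_3$-invariance of $F_d$ and compute $\varphi(F_d)$. For $(\sigma, \tau) \in SL_3(K) \times SL_3(K)$ one has $\det(\sigma A_3 \tau^{-1}) = \det(A_3)$ and $(\sigma A_i \tau^{-1})(\sigma A_3 \tau^{-1})^{-1} = \sigma (A_i A_3^{-1}) \sigma^{-1}$; the $GL_3(K)$-conjugation invariance of $f_d$ therefore absorbs the transformation, giving $F_d(\sigma A_1 \tau^{-1}, \sigma A_2 \tau^{-1}, \sigma A_3 \tau^{-1}) = F_d(A_1, A_2, A_3)$. Evaluation at $A_3 = I$ yields $F_d(A_1,A_2,I) = f_d(A_1,A_2)$, so $F := \sum_d F_d$ lies in $R(K)$ with $\varphi(F) = f$. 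The only potentially delicate step is establishing polynomiality of $F_d$, which reduces to matching the exponent $d$ against the total degree of $f_d$; no step presents a genuine obstacle.
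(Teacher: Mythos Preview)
Your proof is correct. The paper itself does not prove this proposition; it simply cites \cite[Proposition~4.1]{domokos}. Your homogenization argument --- lifting a degree-$d$ conjugation invariant $f_d$ to $F_d(A_1,A_2,A_3):=\det(A_3)^d\,f_d(A_1A_3^{-1},A_2A_3^{-1})$ and checking that this is a genuine polynomial, is $SL_3\times SL_3$-invariant, and restricts back to $f_d$ at $A_3=I$ --- is the standard device for passing between conjugation invariants on $M_3(K)^2$ and $SL_3\times SL_3$-invariants on $M_3(K)^3$, and every step you outline goes through without difficulty. One minor remark: since the construction $f_d\mapsto F_d$ visibly preserves integrality of coefficients (writing $A_3^{-1}=\det(A_3)^{-1}\mathrm{adj}(A_3)$), your argument also yields the surjectivity of $\varphi:R(\mz)\to U(\mz)$, which is how the proposition is used in Corollary~\ref{cor:nakamoto}.
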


\begin{corollary}\label{cor:nakamoto}
Let $K$ be an infinite field or the ring of integers.
Then $U(K)$ is minimally generated as a $K$-algebra by the eleven elements $\varphi(q),
\varphi(h),\varphi(f_j)$, $j=1,\ldots,9$, satisfying the single algebraic relation
$A(\varphi(q),\varphi(h),\varphi(f_1),\ldots,\varphi(f_9),1)=0$ (where $A$ is given explicitly in Section~\ref{sec:integers}).
Moreover, $R(K)$ is a free module with basis $1,\varphi(q)$ over its
$K$-subalgebra generated by the ten algebraically independent elements $\varphi(h),\varphi(f_1),\ldots,\varphi(f_9)$.
\end{corollary}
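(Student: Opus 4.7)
The plan is to push Theorem~\ref{thm:charfree} forward through the surjection $\varphi: R(K) \to U(K)$ of Proposition~\ref{prop:specI}. First I would observe that $f_{10} = f_{0,0,3}$ is the coefficient of $t_3^3$ in $\det(t_1 A_1 + t_2 A_2 + t_3 A_3)$, namely $\det(A_3)$; hence $\varphi(f_{10}) = \det(I) = 1$. The surjectivity of $\varphi$ and the generating set of $R(K)$ furnished by Theorem~\ref{thm:charfree} then imply that the eleven images $\varphi(q), \varphi(h), \varphi(f_1), \ldots, \varphi(f_9)$ generate $U(K)$. Applying $\varphi$ to the identity $A(q, h, f_1, \ldots, f_{10}) = 0$ produces the claimed relation $A(\varphi(q), \varphi(h), \varphi(f_1), \ldots, \varphi(f_9), 1) = 0$ in $U(K)$.

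Next, setting $P'(K) := K[\varphi(h), \varphi(f_1), \ldots, \varphi(f_9)]$, applying $\varphi$ to the decomposition $R(K) = P(K) \oplus P(K) q$ yields $U(K) = P'(K) + P'(K)\varphi(q)$, so $U(K)$ is finitely generated, hence integral, over $P'(K)$. Since $U(K)$ is a domain of Krull dimension $\dim M_3(K)^2 - \dim PGL_3(K) = 18 - 8 = 10$, the Cohen--Seidenberg theorem gives $\dim P'(K) = 10$, which forces algebraic independence of the ten generators and makes $P'(K)$ a polynomial ring in ten variables.

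The remaining and critical step is to establish that $1$ and $\varphi(q)$ are linearly independent over $P'(K)$, giving $U(K) = P'(K) \oplus P'(K)\varphi(q)$. If this failed, the monic quadratic $\varphi(q)^2 + b\varphi(q) + c = 0$ with $b, c \in P'(K)$, combined with integral closedness of the UFD $P'(K)$, would force $\varphi(q) \in P'(K)$ and hence $U(K) = P'(K)$ would be a polynomial ring in ten variables --- contradicting the known non-polynomial Hilbert series of $U(K)$, which carries an additional $t_1^3 t_2^3$ in the numerator coming from Teranishi's rank-$2$ free module structure in characteristic zero, transported to arbitrary infinite $K$ via good-filtration / Hilbert-series arguments and to $\mz$ by reduction mod $p$, precisely in parallel with the proof of Theorem~\ref{thm:charfree}. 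The main obstacle is uniformly executing this last comparison across all admissible base rings; once freeness is in hand, minimality of the generating system and the defining-ness of the relation both follow, since any further algebraic relation could be reduced modulo the monic quadratic to produce a nontrivial element of $P'(K) \cap P'(K)\varphi(q) = 0$.
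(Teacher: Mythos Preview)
Your overall strategy---push Theorem~\ref{thm:charfree} through $\varphi$, then establish algebraic independence of $\varphi(h),\varphi(f_1),\ldots,\varphi(f_9)$ via a transcendence-degree count, then argue $\varphi(q)\notin P'(K)$ via integral closedness of the polynomial ring---matches the paper's. The divergence is entirely in how you rule out $\varphi(q)\in P'(K)$.

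You argue that $\varphi(q)\in P'(K)$ would make $U(K)$ a polynomial ring, contradicting its Hilbert series, which you source from Teranishi in characteristic zero and then transport via good filtrations. This is logically sound, but it leans on Teranishi's structure theorem for $U(\mq)$ (or at least its Hilbert-series consequence), which is precisely one of the results Corollary~\ref{cor:nakamoto} is meant to \emph{recover} from Theorem~\ref{thm:charfree}. So your argument, while correct, partly defeats the purpose of the corollary as an independent derivation.

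The paper instead disposes of $\varphi(q)\in P'(K)$ by a direct, characteristic-free evaluation: on the pair $(E_{21}-E_{32},\,E_{12}+E_{23})$ the nine elements $\varphi(f_1),\ldots,\varphi(f_9)$ all vanish, so any element of $P'(K)$ of total degree $6$ would have to be a scalar multiple of $\varphi(h)^{?}$---but $\varphi(h)$ has degree $4$, so no monomial in $P'(K)$ of degree $6$ survives this specialization, whereas $\varphi(q)$ takes a nonzero value there. This is elementary, uniform in $K$, and requires no external input about $U(K)$. The paper also explicitly rewrites each $\varphi(f_j),\varphi(h),\varphi(q)$ in terms of the standard trace generators $t(A),s(A),d(A),t(B),\ldots,t(B^2A^2BA)$, which is what actually connects the relation $A(\ldots)=0$ to Nakamoto's formula; your proposal omits this translation, though it is computational rather than conceptual.
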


\begin{proof} First we express the $\varphi$-images of the generators of $R(K)$ in terms of the usual generators of $U(K)$.
Define the functions $t,s,d$ on $M_3(K)$ by the equality
\[
\det(zI+A)=z^3+t(A)z^2+s(A)z+d(A),
\]
where $I$ is the $3\times 3$ identity matrix and $z\in K$ arbitrary.
One has the equality
\[s(AB)=t(A^2B^2)+t(AB)t(A)t(B)-t(A^2B)t(B)-t(AB^2)t(A)-s(A)s(B)\]
for $A,B\in M_3(K)$ (see \cite[Lemma 2]{domokos:2002} for a generalization).
 Furthermore, we have
\begin{eqnarray*}&{}&\varphi(f_{3,0,0})(A,B)=d(A),\ \varphi(f_{0,3,0})(A,B)=d(B),\
\varphi(f_{0,0,3})(A,B)=1,
\\ &{}& \varphi(f_{2,0,1})(A,B)=s(A),\qquad \varphi(f_{0,2,1})(A,B)=s(B), \\
&{}& \varphi(f_{1,0,2}(A,B)=t(A),\qquad \varphi(f_{0,1,2})(A,B)=t(B), \\
&{}&  \varphi(f_{1,1,1})(A,B)=t(A)t(B)-t(AB), \\
&{}&  \varphi(f_{2,1,0})(A,B)=t(A^2B)-t(AB)t(A)+s(A)t(B),\\
&{}&  \varphi(f_{1,2,0})(A,B)=t(AB^2)-t(AB)t(B)+t(A)s(B).
\end{eqnarray*}
Applying Amitsur's formula \cite{amitsur} one gets
\begin{eqnarray*}
\varphi(h)(A,B)&=&-t(A^2B^2)+t(A^2B)t(B)-t^2(A)s(B)+2s(A)s(B), \\
\varphi(q)(A,B)&=&t(B^2A^2BA)-s(A)s(B)t(AB)-t(A^2B)t(AB)t(B) \\
&{}& -t(AB^2)t(AB)t(A)+t^2(AB)t(A)t(B).
\end{eqnarray*}
Therefore by Theorem~\ref{thm:charfree} and Proposition~\ref{prop:specI},
the eleven elements
\begin{equation}\label{eq:generators}
t(A),s(A),d(A),t(B),s(B),d(B),t(AB),t(A^2B),t(AB^2),t(A^2B^2),t(B^2A^2BA)
\end{equation}
generate $U(K)$. Moreover, $t(B^2A^2BA)$ satisfies a monic quadratic relation over the subalgebra $W(K)$ of $U(K)$
generated by the first ten elements. Since by general principles on group actions, the transcendence degree of $U(K)$ is ten,
the first ten generators are algebraically independent. Moreover, $t(B^2A^2BA)$ does not vanish on the
pair $(E_{21}-E_{32},E_{12}+E_{23})$ (where $E_{ij}$ is the matrix unit whose only non-zero entry is a $1$ in the $(i,j)$-position),
whereas all the first nine generators vanish on this pair. Since the tenth generator has degree $4$ and the eleventh generator
has degree $6$, it follows that
 $t(B^2A^2BA)$ is not contained in $W(K)$. Hence by the integral closeness of $W(K)$ and by the quadratic relation we conclude
 that $U(K)=W(K)\oplus t(B^2A^2BA) W(K)$.
 The statements in our Corollary obviously follow.
\end{proof}

\begin{corollary}\label{cor:conjinvcharzero}
When $K$ is an infinite field with $\mathrm{char}(K)\neq 2$ or $3$, then the algebra $U(K)$ is minimally generated by
$\varphi(Q),\varphi(H),\varphi(f_1),\ldots,\varphi(f_9)$, and these generators satisfy the single algebraic relation
\[\varphi(Q)^2=\varphi(H)^3+27\varphi(H)\varphi(\widetilde{S})-\frac{27}{4}\varphi(\widetilde{T}).\]
\end{corollary}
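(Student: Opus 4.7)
The plan is to deduce the corollary from Corollary~\ref{cor:nakamoto} via the triangular change of generators $(\varphi(q),\varphi(h))\leftrightarrow(\varphi(Q),\varphi(H))$ dictated by (\ref{eq:H}) and (\ref{eq:Q}), combined with a transfer of the identity of Theorem~\ref{thm:charzero} from characteristic zero to characteristic $\neq 2,3$.

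First I would verify that (\ref{eq:quadrel}) continues to hold in $R(K)$ whenever $\mathrm{char}(K)\neq 2,3$. The formulas (\ref{eq:H}), (\ref{eq:Q}) for $H$ and $Q$, together with the substitution table defining $\widetilde{S}$ and $\widetilde{T}$, have all coefficients in $\mz[\tfrac{1}{6}]$, so these elements are well-defined in $R(K)$ via the same expressions. After clearing the single $\tfrac{1}{4}$ on the right-hand side of (\ref{eq:quadrel}), we obtain a polynomial identity in $\mq[x_{ij}^{(r)}]$ whose coefficients are $p$-integral for every prime $p\neq 2,3$; reducing modulo any such prime preserves the identity, so it remains valid over $K$. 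Applying the homomorphism $\varphi$ of Proposition~\ref{prop:specI} yields the claimed relation in $U(K)$.

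Next, I would transfer the generating and minimality statements. Applying $\varphi$ to (\ref{eq:H}) and (\ref{eq:Q}) and solving for the images of $h$ and $q$ shows that $\varphi(h)\in K[\varphi(H),\varphi(f_1),\ldots,\varphi(f_9)]$ and $\varphi(q)\in K[\varphi(Q),\varphi(H),\varphi(f_1),\ldots,\varphi(f_9)]$, both with coefficients from $\mz[\tfrac{1}{6}]$. Hence the new eleven elements generate the same $K$-subalgebra as the eleven elements of Corollary~\ref{cor:nakamoto}, namely all of $U(K)$. Minimality is preserved because the change of variables is triangular, invertible, and respects the grading: the nine elements $\varphi(f_i)$ remain linearly independent in degree $3$, and Hilbert series considerations force exactly one additional generator in each of degrees $6$ and $9$, supplied by $\varphi(H)$ and $\varphi(Q)$ respectively.

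Finally, I would argue that this is the only relation. The surjection $K[Q_0,H_0,F_1,\ldots,F_9]\twoheadrightarrow U(K)$ has a prime kernel of height one (since $U(K)$ has Krull dimension $10$), and in a polynomial ring every height-one prime is principal. The polynomial
\[
\Psi\;:=\;Q_0^2-H_0^3-27H_0\widetilde{S}+\tfrac{27}{4}\widetilde{T}
\]
(with $\widetilde{S},\widetilde{T}$ regarded as polynomials in the $F_i$) lies in this kernel and is monic quadratic in $Q_0$; since $H_0^3+27H_0\widetilde{S}-\tfrac{27}{4}\widetilde{T}$ has odd degree $3$ in $H_0$, it cannot be a square in $K[H_0,F_1,\ldots,F_9]$, so $\Psi$ is irreducible and therefore generates the kernel. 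The main obstacle is the transfer step in the second paragraph: one must keep careful track of the factors of $2$ and $3$ in the denominators of $H,Q,\widetilde{S},\widetilde{T}$ and in the coefficient $\tfrac{27}{4}$ of (\ref{eq:quadrel}) to confirm the identity actually survives reduction modulo every prime $p\neq 2,3$; the remaining steps are then straightforward bookkeeping.
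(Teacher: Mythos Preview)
Your argument is correct, and since the paper offers no explicit proof of this corollary (it is stated immediately after Corollary~\ref{cor:nakamoto} as an evident consequence), your write-up is in fact more detailed than what the paper provides.

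The implicit route in the paper is the change-of-variables one you describe in your second paragraph: the identity at the start of Section~\ref{sec:integers}, namely $Q^2-H^3-27H\widetilde{S}+\tfrac{27}{4}\widetilde{T}=A(q,h,f_1,\ldots,f_{10})$, shows that (\ref{eq:quadrel}) and the relation $A=0$ of Theorem~\ref{thm:charfree} are the same equation written in two coordinate systems on the polynomial ring, related by a graded-triangular automorphism over $\mz[\tfrac{1}{6}]$. Specializing $f_{10}\mapsto 1$ via $\varphi$ and invoking Corollary~\ref{cor:nakamoto} then gives all three assertions at once. Your first paragraph instead reduces (\ref{eq:quadrel}) directly modulo primes $p\neq 2,3$; this is equivalent and equally valid, since all of $H,Q,\widetilde{S},\widetilde{T}$ and the coefficient $\tfrac{27}{4}$ live in $\mz[\tfrac{1}{6}]$. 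Your third paragraph proves uniqueness of the relation via irreducibility of $\Psi$ (height-one prime in a UFD, plus the odd $H_0$-degree preventing a factorization), which is a clean self-contained alternative to simply transporting the principal ideal $(A(q_0,h_0,F_1,\ldots,F_9,1))$ through the coordinate change; both approaches work and yield the same conclusion. The ``main obstacle'' you flag is not a real obstacle: the denominators are exactly as you say, so no further checking is needed.
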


\begin{remark} {\rm
(i) Expressing the left-hand side of $A(\varphi(q),\varphi(h),\varphi(f_1),\ldots,\varphi(f_9),1)=0$ in terms of the generators (\ref{eq:generators})
we obtain a transparent derivation of the relation found originally by hard computational labour by Nakamoto \cite{nakamoto}.
We include this relation in the Appendix.

(ii) The form of the relation in Corollary~\ref{cor:conjinvcharzero} is rather simple (or better to say that the complication is built into the
nineteenth century  expressions for $S$ and $T$ due to \cite{aronhold}): Indeed, the quartic or  sextic generators appear only
in three terms, and the remaining $9$ generators appear only in two
prominent classically known (though complicated) expressions.

(iii) We note that working over a characteristic zero base field, another minimal generating system of $U(K)$
is found in \cite{aslaksen-drensky-sadikova}, such that the relation between them takes a  simpler form than the relation
 in \cite{nakamoto}. This relation is of different nature than
the one in Corollary~\ref{cor:conjinvcharzero}. }
\end{remark}

\pagebreak \noindent {\bf Appendix.}  Setting
\begin{eqnarray*} t_1:=t(A),\quad s_1:=s(A),\quad d_1:=d(A),\quad t_2:=t(B), \quad s_2:=s(B), \quad d_2:=d(B)\\
r:=t(B^2A^2BA), \quad k:=t(A^2B^2),\quad w_1:=t(A^2B),\quad w_2:=t(AB^2),\quad z:=t(AB)
\end{eqnarray*}
we have
\begin{eqnarray*}
0&=&r^2 - rkz + rkt_1t_2 - rw_1w_2 - rw_1t_1t_2^2 - rw_2t_1^2t_2
\\ &+& rzt_1^2t_2^2 + 3rd_1d_2 - rd_1s_2t_2 - rd_2s_1t_1 - rs_1s_2t_1t_2
\\ &+& k^3 - 2k^2w_1t_2 - 2k^2w_2t_1 + k^2zt_1t_2 - 5k^2s_1s_2 + k^2s_1t_2^2 + k^2s_2t_1^2
\\ &+& kw_1^2s_2 + kw_1^2t_2^2 + kw_1w_2z + 2kw_1w_2t_1t_2 - kw_1zs_2t_1 - kw_1zt_1t_2^2 - 3kw_1d_2s_1
\\ &+& kw_1d_2t_1^2 + 9kw_1s_1s_2t_2 - 2kw_1s_1t_2^3 - 2kw_1s_2t_1^2t_2 + kw_2^2s_1 + kw_2^2t_1^2 - kw_2zs_1t_2
\\ &-& kw_2zt_1^2t_2 - 3kw_2d_1s_2 + kw_2d_1t_2^2 + 9kw_2s_1s_2t_1 - 2kw_2s_1t_1t_2^2 - 2kw_2s_2t_1^3 + kz^2s_1s_2
\\ &-& 6kzd_1d_2 + 4kzd_1s_2t_2 - kzd_1t_2^3 + 4kzd_2s_1t_1 - kzd_2t_1^3 - 8kzs_1s_2t_1t_2 + 2kzs_1t_1t_2^3
\\ &+& 2kzs_2t_1^3t_2 + 3kd_1d_2t_1t_2 - 2kd_1s_2^2t_1 - 2kd_2s_1^2t_2 + 8ks_1^2s_2^2 - 2ks_1^2s_2t_2^2 - 2ks_1s_2^2t_1^2
\\ &+& w_1^3d_2 - w_1^3s_2t_2 - w_1^2w_2s_2t_1 - 2w_1^2zd_2t_1 + 2w_1^2zs_2t_1t_2 + 4w_1^2d_2s_1t_2 - w_1^2d_2t_1^2t_2
\\&-& w_1^2s_1s_2^2 - 4w_1^2s_1s_2t_2^2 + w_1^2s_1t_2^4 + w_1^2s_2t_1^2t_2^2 - w_1w_2^2s_1t_2 + w_1w_2zs_1t_2^2 + w_1w_2zs_2t_1^2
\\ &-& 6w_1w_2d_1d_2 + 4w_1w_2d_1s_2t_2 - w_1w_2d_1t_2^3 + 4w_1w_2d_2s_1t_1 - w_1w_2d_2t_1^3 - 8w_1w_2s_1s_2t_1t_2
\\ &+& 2w_1w_2s_1t_1t_2^3 + 2w_1w_2s_2t_1^3t_2 + w_1z^2d_2s_1 + w_1z^2d_2t_1^2 - w_1z^2s_1s_2t_2 - w_1z^2s_2t_1^2t_2
\\ &+& 6w_1zd_1d_2t_2 + w_1zd_1s_2^2 - 4w_1zd_1s_2t_2^2 + w_1zd_1t_2^4 - 8w_1zd_2s_1t_1t_2 + 2w_1zd_2t_1^3t_2
\\ &+&  w_1zs_1s_2^2t_1 + 8w_1zs_1s_2t_1t_2^2 - 2w_1zs_1t_1t_2^4 - 2w_1zs_2t_1^3t_2^2 - 3w_1d_1d_2s_2t_1 - 2w_1d_1d_2t_1t_2^2
\\ &+& 2w_1d_1s_2^2t_1t_2 + 4w_1d_2s_1^2s_2 + 2w_1d_2s_1^2t_2^2 - w_1d_2s_1s_2t_1^2 - 8w_1s_1^2s_2^2t_2 + 2w_1s_1^2s_2t_2^3
\\ &+& 2w_1s_1s_2^2t_1^2t_2 + w_2^3d_1 - w_2^3s_1t_1 - 2w_2^2zd_1t_2 + 2w_2^2zs_1t_1t_2 + 4w_2^2d_1s_2t_1 - w_2^2d_1t_1t_2^2
\\ &-& w_2^2s_1^2s_2 - 4w_2^2s_1s_2t_1^2 + w_2^2s_1t_1^2t_2^2 + w_2^2s_2t_1^4 + w_2z^2d_1s_2 + w_2z^2d_1t_2^2 - w_2z^2s_1s_2t_1
\\ &-& w_2z^2s_1t_1t_2^2 + 6w_2zd_1d_2t_1 - 8w_2zd_1s_2t_1t_2 + 2w_2zd_1t_1t_2^3 + w_2zd_2s_1^2 - 4w_2zd_2s_1t_1^2
\\ &+& w_2zd_2t_1^4 + w_2zs_1^2s_2t_2 + 8w_2zs_1s_2t_1^2t_2 - 2w_2zs_1t_1^2t_2^3 - 2w_2zs_2t_1^4t_2 - 3w_2d_1d_2s_1t_2
\\ &-& 2w_2d_1d_2t_1^2t_2 + 4w_2d_1s_1s_2^2 - w_2d_1s_1s_2t_2^2 + 2w_2d_1s_2^2t_1^2 + 2w_2d_2s_1^2t_1t_2 - 8w_2s_1^2s_2^2t_1
\\  &+& 2w_2s_1^2s_2t_1t_2^2 + 2w_2s_1s_2^2t_1^3 + z^3d_1d_2 - z^3d_1s_2t_2 - z^3d_2s_1t_1 + z^3s_1s_2t_1t_2 - 5z^2d_1d_2t_1t_2
\\ &+& 4z^2d_1s_2t_1t_2^2 - z^2d_1t_1t_2^4 + 4z^2d_2s_1t_1^2t_2 - z^2d_2t_1^4t_2 - z^2s_1^2s_2^2 - 4z^2s_1s_2t_1^2t_2^2 + z^2s_1t_1^2t_2^4
\\ &+& z^2s_2t_1^4t_2^2 + 6zd_1d_2s_1s_2 + zd_1d_2s_1t_2^2 + zd_1d_2s_2t_1^2 + 2zd_1d_2t_1^2t_2^2 - 4zd_1s_1s_2^2t_2
\\ &+& zd_1s_1s_2t_2^3 - 2zd_1s_2^2t_1^2t_2 - 4zd_2s_1^2s_2t_1 - 2zd_2s_1^2t_1t_2^2 + zd_2s_1s_2t_1^3 + 8zs_1^2s_2^2t_1t_2
\\ &-& 2zs_1^2s_2t_1t_2^3 - 2zs_1s_2^2t_1^3t_2 + 9d_1^2d_2^2 - 6d_1^2d_2s_2t_2 + d_1^2d_2t_2^3 + d_1^2s_2^3 - 6d_1d_2^2s_1t_1 + d_1d_2^2t_1^3
\\ &-& 2d_1d_2s_1s_2t_1t_2 + 2d_1s_1s_2^3t_1 + d_2^2s_1^3 + 2d_2s_1^3s_2t_2 - 4s_1^3s_2^3 + s_1^3s_2^2t_2^2 + s_1^2s_2^3t_1^2.
\end{eqnarray*}

\begin{center} {\bf Acknowledgements}\end{center}

This project was caried out in the framework of the exchange program
between the Hungarian and Bulgarian Academies of Sciences.
We thank for this support.

\goodbreak

%%%%%%%%%%%%%%%%%%%%%%%%%%%%%%%%

\end{document}